\numberwithin{equation}{section}
\newtheorem{Theorem}{Theorem}[section]
\newtheorem{nonum}{Theorem}
\newtheorem{Lemma}[Theorem]{Lemma}
\newtheorem{Definition}[Theorem]{Definition}
\def\supp{{\rm supp}\ }
\def\ep{\varepsilon}
\def\bbZ{\mathbb{Z}}
\def\bbR{\mathbb{R}}
\def\Sp{\mathcal{S}}
\def\M{\mathcal{M}}
\def\D{\mathscr{D}}
\begin{document}

\title[Weighted estimates for multilinear operators]{The sharp weighted bound for multilinear maximal functions and Calder\'{o}n-Zygmund operators}

\subjclass[2010]{42B20, 42B25, 47H60}

\thanks{The second author is partially supported by the NSF under grant 1201504. The third author is supported partially by the National Natural Science Foundation of China (10990012)  and the Research Fund for the Doctoral Program of Higher Education.}

\author{Kangwei Li}
\email{likangwei9@mail.nankai.edu.cn}

\address{School of Mathematical Sciences and LPMC,  Nankai University,
      Tianjin~300071, China}

\author{Kabe Moen}

\email{kabe.moen@ua.edu}
\address{Department of Mathematics, University of Alabama, Tuscaloosa, AL 35487, USA}

\author{Wenchang Sun}

\email{sunwch@nankai.edu.cn}

\address{School of Mathematical Sciences and LPMC,  Nankai University,
      Tianjin~300071, China}

\begin{abstract}
We investigate the weighted bounds for multilinear maximal functions and Calder\'{o}n-Zygmund operators from $L^{p_1}(w_1)\times\cdots\times L^{p_m}(w_m)$
to $L^{p}(v_{\vec{w}})$, where $1<p_1,\cdots,p_m<\infty$ with $1/{p_1}+\cdots+1/{p_m}=1/p$ and $\vec{w}$ is a multiple $A_{\vec{P}}$ weight.  We prove the sharp bound for the multilinear maximal function for all such $p_1,\ldots, p_m$ and prove the sharp bound for $m$-linear Calder\'on-Zymund operators when $p\geq 1$.
\end{abstract}

\keywords{
multiple weights; multilinear maximal function; multilinear Calder\'{o}n-Zygmund operators; weighted estimates.}
\maketitle
\section{Introduction and Main Results}

A weight is a non-negative locally integrable function.  Given $p$, $1<p<\infty$, an $A_p$ weight is one that satisfies the following
$$[w]_{A_p} =\sup_Q \Big(\frac{1}{|Q|}\int_Q w\Big)\Big(\frac{1}{|Q|}\int_Q w^{1-p'}\Big)^{p-1}<\infty.$$
It is well known that the Hardy-Littlewood maximal operator and Calder\'on-Zygmund operators are bounded on $L^p(w)$ when $w\in A_p$.  The sharp dependence for the Hardy-Littlewood maximal function is given by
\begin{equation}\label{eq:Buck}
  \|M\|_{L^p(w)\rightarrow L^p(w)}\le C_{n,p}[w]_{A_p}^{\frac{p'}{p}}.
\end{equation}
Inequality \eqref{eq:Buck} was first proven by Buckley \cite{B}. (We refer the reader to \cite{Ler1} for a beautiful proof of this inequality and a summary of the history.)  Later is was proven by Hyt\"onen \cite{Hy1} that if $T$ is a Calder\'on-Zygmund operator then
\begin{equation}\label{eq:Hy}\|T\|_{L^p(w)\rightarrow L^p(w)}\leq C_{n,p,T}[w]_{A_p}^{\max(1,\frac{p'}{p})}.\end{equation}
Again, we refer the reader to \cite{Hy2,Ler2} for the background material and further references.  In this article we prove the the multilinear analogs of inequalities \eqref{eq:Buck} and \eqref{eq:Hy}.  We begin with a few definitions.

First, let us define multiple $A_{\vec{P}}$ weights.
In \cite{LOPTT}, Lerner, Ombrosi, P\'{e}rez, Torres and Trujillo-Gonz\'{a}lez introduced the theory of multiple $A_{\vec{P}}$ weights.
\begin{Definition}
Let $\vec{P}=(p_1,\cdots,p_m)$ with $1\le p_1,\cdots,p_m<\infty$ and $1/{p_1}+\cdots+1/{p_m}=1/p$. Given $\vec{w}=(w_1,\cdots, w_m)$, set
\[
  v_{\vec{w}}=\prod_{i=1}^m w_i^{p/{p_i}}.
\]
We say that $\vec{w}$ satisfies the multilinear $A_{\vec{P}}$ condition if
\[
  [\vec{w}]_{A_{\vec{P}}}:=\sup_Q \left(\frac{1}{|Q|}\int_Q v_{\vec{w}}\right)\prod_{i=1}^m\left( \frac{1}{|Q|}\int_Q w_i^{1-p_i'}\right)^{p/{p_i'}}<\infty,
\]
where $[\vec{w}]_{A_{\vec{P}}}$ is called the $A_{\vec{P}}$ constant of $\vec{w}$. When $p_i=1$, $( \frac{1}{|Q|}\int_Q w_i^{1-p_i'})^{1/{p_i'}}$ is
understood as $(\inf_Q w_i)^{-1}$.
\end{Definition}
It is easy to see that in the linear case (that is, if $m = 1$) $[\vec{w}]_{A_{\vec{P}}}=[w]_{A_p}$ is the
usual $A_p$ constant. In \cite{LOPTT}, it was shown that for $1<p_1,\cdots,p_m<\infty$, $\vec{w}\in A_{\vec{P}}$ if and only if $w_i^{1-p_i'}\in A_{mp_i'}$ and $v_{\vec{w}}\in A_{mp}$.

Given $\vec{f}=(f_1,\cdots,f_m)$, we define the multilinear maximal function by
\[
  \mathcal{M}(\vec{f})=\sup_{Q\ni x}\prod_{i=1}^m\frac{1}{|Q|}\int_{Q}|f_i|.
\]

In \cite{LOPTT}, the authors proved that $\vec{w}\in A_{\vec{P}}$ if and only if
\[
  \|\mathcal{M}(\vec{f})\|_{L^p(v_{\vec{w}})}\le C \prod_{i=1}^m \|f_i\|_{L^{p_i}(w_i)}.
\]

Recall that inequality \eqref{eq:Buck} is sharp in the sense that the exponent on $[w]_{A_p}$
cannot be improved.  The analogous question for the operator $\M$ has remained open.  In \cite{DLP}, by mixed estimates involving $A_\infty$, Dami\'{a}n, Lerner and P\'{e}rez proved the following result.

\begin{nonum}\cite[Theorem 1.2]{DLP}
Let $1<p_i<\infty$, $i=1,\cdots,m$ and $1/p=1/{p_1}+\cdots+1/{p_m}$. Denote by $\alpha=\alpha(p_1,\cdots,p_m)$ the best possible
power in
\begin{equation}\label{eq:em}
  \|\mathcal{M}(\vec{f})\|_{L^p(v_{\vec{w}})}\le C_{m,n,\vec{P}} [\vec{w}]_{A_{\vec{P}}}^\alpha\prod_{i=1}^m \|f_i\|_{L^{p_i}(w_i)}.
\end{equation}
Then the following results hold:
\begin{enumerate}
\item for all $1<p_1,\cdots,p_m<\infty$, $\frac{m}{mp-1}\le \alpha\le \frac{1}{p}\left(1+\sum_{i=1}^m\frac{1}{p_i-1}\right)$.
\item if $p_1 = p_2 = \cdots= p_m = r > 1$, then $\alpha=\frac{m}{r-1}$.
\end{enumerate}
\end{nonum}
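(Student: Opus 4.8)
The plan is to treat the upper and lower bounds for $\alpha$ by different routes: the upper bounds via passage to a dyadic model, domination by a sparse form, and a mixed $A_{\vec P}$--$A_\infty$ estimate; the lower bounds via explicit power weights. Note that this scheme is insensitive to whether $p\ge1$, since the Carleson embedding below is always applied at the exponents $p_i>1$.

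For the upper bounds the first step is the reduction $\mathcal M(\vec f)\le C_{n,m}\sum_k\mathcal M^{\mathscr D_k}(\vec f)$ over $3^n$ fixed dyadic grids, so one may work with a single dyadic operator $\mathcal M^{\mathscr D}$. Decomposing the level sets $\{\mathcal M^{\mathscr D}(\vec f)>2^k\}$ into maximal dyadic cubes and summing in $k$ produces a sparse family $\mathcal S=\mathcal S(\vec f,v_{\vec w},p)$ with
\[
  \|\mathcal M^{\mathscr D}(\vec f)\|_{L^p(v_{\vec w})}^p\lesssim\sum_{Q\in\mathcal S}\Big(\prod_{i=1}^m\langle|f_i|\rangle_Q\Big)^p v_{\vec w}(Q),\qquad \langle g\rangle_Q:=\tfrac1{|Q|}\int_Q g.
\]
Writing $\sigma_i=w_i^{1-p_i'}$, $g_i=|f_i|\sigma_i^{-1}$ and $\langle|f_i|\rangle_Q=\frac{\sigma_i(Q)}{|Q|}\langle g_i\rangle_{\sigma_i,Q}$, the defining inequality for $[\vec w]_{A_{\vec P}}$ gives the cubewise estimate $\big(\prod_i\langle|f_i|\rangle_Q\big)^p v_{\vec w}(Q)\le[\vec w]_{A_{\vec P}}\prod_i\big(\sigma_i(Q)\langle g_i\rangle_{\sigma_i,Q}^{p_i}\big)^{p/p_i}$. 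Summing over $Q\in\mathcal S$, applying H\"older with exponents $p_i/p$, and using that a Lebesgue-sparse family is $\sigma_i$-Carleson with constant $\lesssim[\sigma_i]_{A_\infty}$ (the Fujii--Wilson constant $[\sigma]_{A_\infty}=\sup_Q\sigma(Q)^{-1}\int_QM(\sigma\chi_Q)$, in the spirit of Hyt\"onen--P\'erez) together with the Carleson embedding theorem at exponent $p_i>1$ yields the mixed estimate
\[
  \|\mathcal M(\vec f)\|_{L^p(v_{\vec w})}\le C_{m,n,\vec P}\,[\vec w]_{A_{\vec P}}^{1/p}\prod_{i=1}^m[\sigma_i]_{A_\infty}^{1/p_i}\prod_{i=1}^m\|f_i\|_{L^{p_i}(w_i)}.
\]
Part (1) then follows by inserting $[\sigma_i]_{A_\infty}\le[\sigma_i]_{A_{mp_i'}}\le[\vec w]_{A_{\vec P}}^{p_i'/p}$ (the last bound from the $A_{\vec P}$ characterization of \cite{LOPTT}, with the constant tracked by H\"older), giving $\alpha\le\frac1p+\sum_i\frac1{p_i}\cdot\frac{p_i'}p=\frac1p\big(1+\sum_i\frac1{p_i-1}\big)$.

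For the upper bound in part (2), where $p_1=\cdots=p_m=r$, so $p=r/m$ and $v_{\vec w}=\prod_i\sigma_i^{-(r-1)/m}$, the mixed estimate reduces matters to showing $\prod_i[\sigma_i]_{A_\infty}\lesssim[\vec w]_{A_{\vec P}}^{m/(r-1)}$, which gives $\alpha\le\frac mr+\frac1r\cdot\frac m{r-1}=\frac m{r-1}$. That the $A_{\vec P}$ condition controls the \emph{product} of the local $A_\infty$ quantities on each cube is immediate from Jensen's inequality for $\log$:
\[
  \prod_i\langle\sigma_i\rangle_Q\exp\!\big(\langle\log\sigma_i^{-1}\rangle_Q\big)=\Big(\prod_i\langle\sigma_i\rangle_Q\Big)\exp\!\Big(\tfrac m{r-1}\langle\log v_{\vec w}\rangle_Q\Big)\le\Big(\langle v_{\vec w}\rangle_Q\prod_i\langle\sigma_i\rangle_Q^{(r-1)/m}\Big)^{m/(r-1)}\le[\vec w]_{A_{\vec P}}^{m/(r-1)}.
\]
The delicate point — and what I expect to be the main obstacle of the whole theorem — is passing from this cubewise statement to the \emph{global} constants $[\sigma_i]_{A_\infty}$, a priori a product of suprema over unrelated cubes; I would attack it by a sharp-reverse-H\"older / self-improvement argument exploiting that the $\sigma_i$ arise as the dual weights of a single $A_{\vec P}$ tuple, so that their oscillations cannot be supported on disjoint regions without inflating $[\vec w]_{A_{\vec P}}$.

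Finally, for the lower bound $\alpha\ge\frac m{mp-1}$ — which, since $mp=r$, also gives the sharpness in part (2) — I would test on $\bbR^n$ with $w_i(x)=|x|^{n(1-\ep)(p_i-1)}$, so that $\sigma_i(x)=|x|^{-n(1-\ep)}$ and $v_{\vec w}(x)=|x|^{n(1-\ep)(mp-1)}$, against $f_i=\sigma_i\chi_{B(0,1)}$. A direct computation gives $[\vec w]_{A_{\vec P}}\sim\ep^{-(mp-1)}$, $\prod_i\|f_i\|_{L^{p_i}(w_i)}\sim\ep^{-1/p}$, and $\mathcal M(\vec f)(x)\gtrsim\ep^{-m}|x|^{-mn(1-\ep)}$ for $|x|<1$, hence $\|\mathcal M(\vec f)\|_{L^p(v_{\vec w})}\gtrsim\ep^{-m-1/p}$; substituting into \eqref{eq:em} and letting $\ep\to0^+$ forces $\alpha\ge\frac m{mp-1}$.
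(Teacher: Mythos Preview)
This statement is Theorem~A, which the paper \emph{cites} from \cite{DLP} rather than proves; there is no proof of it in the present paper to compare against. The paper does, however, comment on how \cite{DLP} obtains it and then proves the stronger Theorem~\ref{thm:m}, so one can still assess your outline against that information.

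Your treatment of the upper bound in part~(1) and of the lower bound matches what the paper attributes to \cite{DLP}: a dyadic/sparse reduction followed by a mixed $A_{\vec P}$--$A_\infty$ estimate for the upper bound, and power-weight examples for the lower bound. That part is fine.

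The gap is in part~(2). You try to squeeze the sharp diagonal exponent $\alpha=\tfrac{m}{r-1}$ out of the mixed estimate by asserting
\[
\prod_{i=1}^m[\sigma_i]_{A_\infty}\ \lesssim\ [\vec w]_{A_{\vec P}}^{\,m/(r-1)},
\]
but you yourself flag this as ``the main obstacle'' and do not prove it; your Jensen argument only gives a \emph{cubewise} bound, whereas the left side is a product of suprema over unrelated cubes, and your proposed self-improvement route is speculative. The paper is explicit that this approach does not succeed: ``Interestingly, the mixed estimates involving $A_\infty$ do not yield the sharp dependence on the constant $[\vec w]_{A_{\vec P}}$. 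The sharp bound along the diagonal is obtained using similar methods to those found in \cite{Ler1}.'' Indeed, the paper's own proof of Theorem~\ref{thm:m} (which specializes to $\alpha=m/(r-1)$ on the diagonal) bypasses $A_\infty$ entirely: after the sparse reduction one factors the $A_{\vec P}$ constant out directly, uses the H\"older-type inequality $|E_Q|\le v_{\vec w}(E_Q)^{1/(mp)}\prod_i\sigma_i(E_Q)^{1/(mp_i')}$ to convert the residual Lebesgue measure into weighted measures on the disjoint sets $E_Q$, and finishes with the unweighted bound for $M_{\sigma_i}^{\mathscr D}$. That direct argument is what you are missing for part~(2).
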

Interestingly, the mixed estimates involving $A_\infty$ do not yield the sharp dependence on the constant $[\vec w]_{A_{\vec{P}}}$.  The sharp bound along the diagonal is obtained using similar methods to those found in \cite{Ler1} and these techniques only seem to work along the diagonal.  In this paper we find the optimal power on $[\vec{w}]_{A_{\vec{P}}}$ for the full range of exponents, $1<p_1,\ldots,p_m<\infty$.  
\begin{Theorem}\label{thm:m}
Suppose $1<p_1,\ldots,p_m<\infty$, $1/p=1/{p_1}+\cdots+1/{p_m}$, and $\vec{w}\in A_{\vec P}$.  Then
\begin{equation}\label{eq:emm}
  \|\mathcal{M}(\vec{f})\|_{L^p(v_{\vec{w}})}\le C_{m,n,\vec{P}} [\vec{w}]_{A_{\vec{P}}}^{\max(\frac{p_1'}{p},\cdots, \frac{p_m'}{p})}\prod_{i=1}^m \|f_i\|_{L^{p_i}(w_i)}.
\end{equation}
Moreover the exponent $[\vec w]_{A_{\vec P}}$ is the best possible.
\end{Theorem}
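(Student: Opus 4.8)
The overall strategy is to reduce \eqref{eq:emm} to an estimate for a sparse sum, to peel off one factor $[\vec w]_{A_{\vec P}}$ by using the $A_{\vec P}$ inequality itself, and then to control the remaining ``weight--free'' sparse sum via one Carleson embedding per function, the constants of which must be treated \emph{jointly} rather than one at a time. First I would reduce to the dyadic setting: by the one--third trick there are dyadic lattices $\mathcal{D}^{(1)},\dots,\mathcal{D}^{(3^n)}$ with $\mathcal{M}(\vec f)\le C_n\sum_k\mathcal{M}_{\mathcal{D}^{(k)}}(\vec f)$, where $\mathcal{M}_{\mathcal{D}}$ is the same supremum taken only over $Q\in\mathcal{D}$, so it suffices to prove \eqref{eq:emm} for a single $\mathcal{M}_{\mathcal{D}}$. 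For $f_i\ge 0$ let $\mathcal{S}\subset\mathcal{D}$ be the usual family of principal cubes for $Q\mapsto\prod_i\frac1{|Q|}\int_Q f_i$ (a child of $Q\in\mathcal{S}$ being a maximal $Q'\subsetneq Q$ for which $\prod_i\frac1{|Q'|}\int_{Q'}f_i>2\prod_i\frac1{|Q|}\int_Q f_i$), with $E_Q$ the part of $Q$ not covered by its $\mathcal{S}$--children. Then $\{E_Q\}_{Q\in\mathcal{S}}$ is disjoint with $|E_Q|\ge\frac12|Q|$, $\mathcal{M}_{\mathcal{D}}(\vec f)\le 2\sum_{Q\in\mathcal{S}}\big(\prod_i\frac1{|Q|}\int_Q f_i\big)\mathbf{1}_{E_Q}$, and by disjointness
\begin{equation*}
\int_{\mathbb R^n}\mathcal{M}_{\mathcal{D}}(\vec f)^p\,v_{\vec w}\ \le\ 2^p\sum_{Q\in\mathcal{S}}\Big(\prod_{i=1}^m\frac1{|Q|}\int_Q f_i\Big)^p v_{\vec w}(Q).
\end{equation*}

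Next I would set $\sigma_i:=w_i^{1-p_i'}$ and substitute $f_i=\sigma_i h_i$, so that $\frac1{|Q|}\int_Q f_i=\langle\sigma_i\rangle_Q\,\langle h_i\rangle_{Q,\sigma_i}$, where $\langle\cdot\rangle_Q$ is the unweighted average, $\langle h_i\rangle_{Q,\sigma_i}=\sigma_i(Q)^{-1}\int_Q h_i\sigma_i$, and $\|f_i\|_{L^{p_i}(w_i)}^{p_i}=\int h_i^{p_i}\sigma_i$. The definition of $[\vec w]_{A_{\vec P}}$ together with $\sum_i p/p_i=1$ gives, for every cube $Q$,
\begin{equation*}
\Big(\prod_{i=1}^m\langle\sigma_i\rangle_Q\Big)^p v_{\vec w}(Q)\ \le\ [\vec w]_{A_{\vec P}}\prod_{i=1}^m\sigma_i(Q)^{p/p_i},
\end{equation*}
so, writing $i_0$ for an index with $p_{i_0}=\min_j p_j$ (hence $p_{i_0}'=\max_j p_j'$ and $p\cdot\max_j(p_j'/p)=p_{i_0}'$), inequality \eqref{eq:emm} for $\mathcal{M}_{\mathcal{D}}$ is reduced to
\begin{equation}\label{eq:keysum}
\sum_{Q\in\mathcal{S}}\prod_{i=1}^m\Big(\sigma_i(Q)\,\langle h_i\rangle_{Q,\sigma_i}^{p_i}\Big)^{p/p_i}\ \le\ C\,[\vec w]_{A_{\vec P}}^{\,p_{i_0}'-1}\prod_{i=1}^m\|h_i\|_{L^{p_i}(\sigma_i)}^{p}.
\end{equation}

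The estimate \eqref{eq:keysum} is the heart of the matter. Hölder's inequality with the exponents $p_i/p$ (whose reciprocals sum to $1$) bounds its left side by $\prod_i\big(\sum_{Q\in\mathcal{S}}\sigma_i(Q)\langle h_i\rangle_{Q,\sigma_i}^{p_i}\big)^{p/p_i}$, and the Carleson embedding theorem bounds the $i$--th factor by $C_{p_i}\kappa_i\|h_i\|_{L^{p_i}(\sigma_i)}^{p_i}$, where $\kappa_i=\sup_{R\in\mathcal{D}}\sigma_i(R)^{-1}\sum_{Q\in\mathcal{S},\,Q\subseteq R}\sigma_i(Q)$ is the Carleson constant of $\mathcal{S}$ relative to $\sigma_i$. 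Since $\mathcal{S}$ is sparse and $\sigma_i\in A_\infty$ (by \cite{LOPTT}, $\sigma_i\in A_{mp_i'}$) one has $\kappa_i\lesssim[\sigma_i]_{A_\infty}$, so \eqref{eq:keysum} — and with it the whole theorem — reduces to the coupled inequality $\prod_{i=1}^m[\sigma_i]_{A_\infty}^{\,p/p_i}\le C\,[\vec w]_{A_{\vec P}}^{\,p_{i_0}'-1}$. This is precisely the step that must not be done crudely: bounding each $[\sigma_i]_{A_\infty}$ separately by a power of $[\vec w]_{A_{\vec P}}$ only yields the exponent $\sum_i(p_i'-1)$, which exceeds $p_{i_0}'-1$ whenever $m\ge2$ — the same loss that prevents the mixed $A_\infty$ estimates of \cite{DLP} from being sharp. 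The point is that $[\vec w]_{A_{\vec P}}$ forbids the $\sigma_i$ from all being $A_\infty$--degenerate at the same scales at once; I would prove the coupled inequality by testing on a cube $Q$, comparing $\int_Q M_{\mathcal{D}}(\sigma_i\mathbf{1}_Q)$ with $\sigma_i(Q)$, and feeding the resulting local averages $\langle\sigma_i\rangle_Q$ and $\langle v_{\vec w}\rangle_Q$ into the $A_{\vec P}$ inequality at that same $Q$, arranging the bookkeeping so that only the index $i_0$ with the largest $p_i'$ contributes. (If a direct proof of the coupled inequality proves awkward, the equivalent device is a secondary stopping--time decomposition of $\mathcal{S}$ adapted to $\sigma_{i_0}$: this turns the $i_0$--factor into a genuinely $\sigma_{i_0}$--sparse Carleson embedding with universal constant and absorbs the remaining factors into $[\vec w]_{A_{\vec P}}$.) I expect this coupled $A_\infty$ estimate to be the main obstacle.

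Finally, for optimality one must produce, for each $i_0$, weights $\vec w\in A_{\vec P}$ with $[\vec w]_{A_{\vec P}}\to\infty$ and functions $\vec f$ for which the ratio of the two sides of \eqref{eq:emm} is at least a constant times $[\vec w]_{A_{\vec P}}^{p_{i_0}'/p}$. A direct computation shows that the symmetric radial power weights of \cite{DLP} saturate \eqref{eq:emm} only on the diagonal $p_1=\dots=p_m$ and otherwise give merely the smaller exponent $m/(mp-1)$, so here one needs an \emph{asymmetric} construction singling out the $i_0$--th slot: taking $w_{i_0}$ extremal of $A_{p_{i_0}}$--type and the remaining $w_i$ essentially trivial reduces the task to an essentially linear extremal problem for $M$ between $L^{p_{i_0}}(w_{i_0})$ and $L^{p}(w_{i_0}^{p/p_{i_0}})$, for which one exhibits a suitable (non--power, e.g.\ self--similar) weight and test function attaining the exponent $p_{i_0}'/p$; letting $i_0$ range over all indices and comparing with the upper bound \eqref{eq:emm} then shows $\max_j(p_j'/p)$ is best possible.
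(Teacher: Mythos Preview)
Your reduction to a sparse sum and the substitution $f_i=\sigma_i h_i$ are exactly as in the paper, but the proof diverges at the crucial step, and the divergence creates a gap you yourself flag but do not close.

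You extract a single power of $[\vec w]_{A_{\vec P}}$ (via $\big(\prod_i\langle\sigma_i\rangle_Q\big)^p v_{\vec w}(Q)\le[\vec w]_{A_{\vec P}}\prod_i\sigma_i(Q)^{p/p_i}$) and are then left needing \eqref{eq:keysum}. Your route to \eqref{eq:keysum} is H\"older followed by $m$ separate Carleson embeddings, which produces the factor $\prod_i\kappa_i^{p/p_i}\lesssim\prod_i[\sigma_i]_{A_\infty}^{p/p_i}$, and the whole argument then hinges on the ``coupled'' inequality $\prod_i[\sigma_i]_{A_\infty}^{p/p_i}\lesssim[\vec w]_{A_{\vec P}}^{\,p_{i_0}'-1}$. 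You do not prove this; the sketch (``testing on a cube $Q$ \ldots feeding the local averages into the $A_{\vec P}$ inequality at that same $Q$'') cannot work as stated, because each $[\sigma_i]_{A_\infty}$ is a supremum over \emph{its own} cube and involves $M(\sigma_i\mathbf 1_Q)$, not a bare average, so there is no single cube at which one can read off all the $A_\infty$ constants simultaneously from the $A_{\vec P}$ condition. The alternative you offer --- a secondary stopping time in $\sigma_{i_0}$ --- would kill the $\kappa_{i_0}$ factor but leaves the remaining $\kappa_i$'s untouched, and those still carry $[\sigma_i]_{A_\infty}$. In short, the step you identify as ``the main obstacle'' is indeed an obstacle, and the proposal does not get past it.

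The paper avoids this problem entirely by a different bookkeeping. Rather than peeling off one power of $[\vec w]_{A_{\vec P}}$, it pulls out the full power $[\vec w]_{A_{\vec P}}^{p_{i_0}'}$ at once, writing (with $p_1=\min_ip_i$, so $p_1'=\max_ip_i'$)
\[
\prod_i\Big(\frac1{|Q|}\int_Q f_i\sigma_i\Big)^p v_{\vec w}(Q)
=\frac{v_{\vec w}(Q)^{p_1'}\prod_i\sigma_i(Q)^{pp_1'/p_i'}}{|Q|^{mpp_1'}}\cdot\frac{|Q|^{mp(p_1'-1)}}{v_{\vec w}(Q)^{p_1'-1}\prod_i\sigma_i(Q)^{pp_1'/p_i'}}\cdot\prod_i\Big(\int_Q f_i\sigma_i\Big)^p,
\]
bounding the first factor by $[\vec w]_{A_{\vec P}}^{p_1'}$. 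The remaining ``weight--free'' piece is then controlled with \emph{no} $A_\infty$ constants at all: replace $|Q|$ by $2|E_Q|$ and use the pointwise H\"older inequality
\[
|E_Q|=\int_{E_Q} v_{\vec w}^{1/(mp)}\prod_i\sigma_i^{1/(mp_i')}\le v_{\vec w}(E_Q)^{1/(mp)}\prod_i\sigma_i(E_Q)^{1/(mp_i')}.
\]
Because $p_1'$ is the \emph{largest} conjugate exponent, every leftover power $pp_1'/p_i'-p/p_i$ is nonnegative, so the $\sigma_i(E_Q)$'s absorb the $\sigma_i(Q)$'s, and one lands directly on $\sum_Q\prod_i\langle h_i\rangle_{Q,\sigma_i}^p\sigma_i(E_Q)^{p/p_i}$, which H\"older and the disjointness of the $E_Q$ bound by $\prod_i\|M^{\mathscr D}_{\sigma_i}h_i\|_{L^{p_i}(\sigma_i)}^p\lesssim\prod_i\|h_i\|_{L^{p_i}(\sigma_i)}^p$ with universal constants. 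The point is that the H\"older trick on $|E_Q|$ replaces your Carleson constants $\kappa_i$ by $1$.

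On sharpness, your intuition is right that an asymmetric example is needed, and the paper does exactly that: it takes $w_{i_0}(x)=|x|^{(n-\varepsilon)(p_{i_0}-1)}$, $w_j\equiv 1$ for $j\neq i_0$, $f_{i_0}(x)=|x|^{\varepsilon-n}\chi_{B(0,1)}$, $f_j(x)=|x|^{(\varepsilon-n)/p_j}\chi_{B(0,1)}$, computes $[\vec w]_{A_{\vec P}}\simeq\varepsilon^{-p/p_{i_0}'}$ and $\|\mathcal M(\vec f)\|_{L^p(v_{\vec w})}\gtrsim\varepsilon^{-1-1/p}$, which forces the exponent $p_{i_0}'/p$. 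These are ordinary power weights; no self-similar construction is needed.
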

We emphasize that our bounds \eqref{eq:emm} not only improves those in Theorem A, but is also the best possible.  See Figure \ref{fig1} for a visualization of the bilinear case.  
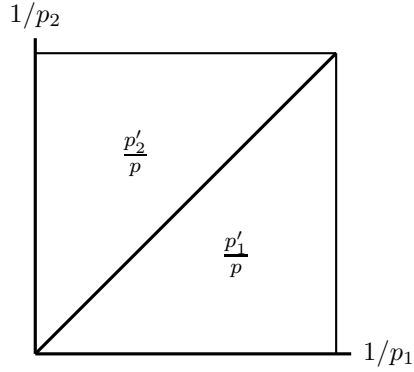
\begin{figure}
\begin{center}\label{fig1}
\begin{tikzpicture}
		\draw[very thick,-] (0,0) -- (4.2,0);
		\draw[very thick,-] (0,0) -- (0,4.2);

		\draw[thick,-] (4,0) -- (4,4);
		\draw[thick,-] (0,4) -- (4,4);
		\draw[very thick,-] (0,0) -- (4,4);

		\draw[] (0,4.5) node {\small$1/p_2$};

		\draw[] (4.7,0) node {\small$1/p_1$};

		\draw[] (2.67,1.33) node {$\frac{p_1'}{p}$};
		\draw[] (1.33,2.67) node {$\frac{p_2'}{p}$};

\end{tikzpicture}

\end{center}
\caption{The sharp exponents on $[\vec w]_{A_{\vec{P}}}$ for the bilinear maximal function.}
\end{figure}

Next we turn to study weighted bounds of multilinear Calder\'{o}n-Zygmund operators.
The theory of  multilinear Calder\'{o}n-Zygmund operators originated in the works of Coifman
and Meyer \cite{CM1,CM2} and was later developed by Christ and Journ\'{e} \cite{CJ},
Kenig and Stein \cite{KS}, and Grafakos and Torres \cite{GT}. The last work provides a comprehensive account of general multilinear Calder\'{o}n-Zygmund operators which we follow in this paper.

\begin{Definition}
Let $T$ be a multilinear operator initially defined on the $m$-fold product of Schwartz spaces and
taking values into the space of tempered distributions,
\[
  T: \mathscr{S}(\bbR^n)\times\cdots\times \mathscr{S}(\bbR^n)\rightarrow \mathscr{S}'(\bbR^n).
\]
we say that $T$ is an $m$-linear Calder\'{o}n-Zygmund operator if, for some $1\le q_i<\infty$, it extends to a bounded multilinear operator
from $L^{q_1} \times\cdots\times L^{q_m}$ to $L^q$ , where $1/{q_1}+\cdots+1/{q_m}=1/q$,
 and if there exists a function $K$, defined off the diagonal $x=y_1=\cdots=y_m$
in $(\bbR^n)^{m+1}$, satisfying
\[
  T(f_1,\cdots,f_m)=\int_{(\bbR^n)^m}K(x,y_1,\cdots,y_m)f_1(y_1)\cdots f_m(y_m)dy_1\cdots dy_m
\]
for all $x\notin \bigcap_{j=1}^m\supp f_i$;
\[
 |K(y_0,y_1,\cdots,y_m)|\le \frac{A}{(\sum_{k,l=0}^m |y_k-y_l|)^{mn}}
\]
and
\[
  |K(y_0,\cdots,y_i,\cdots,y_m)-K(y_0,\cdots,y_i',\cdots,y_m)|\le \frac{A|y_i-y_i'|^\varepsilon}{(\sum_{k,l=0}^m |y_k-y_l|)^{mn+\varepsilon}}
\]
for some $A,\varepsilon>0$ and all $0\le i\le m$, whenever $|y_i-y_i'|\le \frac{1}{2}\max_{0\le k\le m}|y_i-y_k|$.
\end{Definition}

It was shown in \cite{GT} that if $1/{r_1}+\cdots+1/{r_m}=1/r$, then an $m$-linear Calder\'{o}n-Zygmund operator $T$ is bounded from $L^{r_1}\times\cdots\times L^{r_m}$
to $L^r$ when $1<r_i<\infty$ for all $i=1,\cdots,m $; and $T$ is bounded from $L^{r_1}\times\cdots\times L^{r_m}$ to $L^{r,\infty}$ when at least one $r_i=1$. In particular, $T$ is bounded from $L^1\times\cdots\times L^1$ to $L^{1/m,\infty}$.



A weighted theory for $m$-linear Calder\'{o}n-Zygmund operators was developed in \cite{LOPTT},  where it was shown that such operators are bounded from $L^{p_1}(w_1)\times\cdots\times L^{p_m}(w_m)$ to $L^{p}(v_{\vec{w}})$ when $\vec{w}\in A_{\vec{P}}$.  In \cite{DLP}, the authors proved a multilinear version of the $A_2$ conjecture. Specifically, for $p_1=\cdots=p_m=m+1$,
it was shown that
\[
  \|T(\vec{f})\|_{L^p(v_{\vec{w}})}\lesssim [\vec{w}]_{A_{\vec{P}}}\prod_{i=1}^m\|f_i\|_{L^{p_i}(w_i)},
\]
where the estimate for the power of $[\vec{w}]_{A_{\vec{P}}}$ is the best possible.

Due  to the lack of an appropriate extrapolation theorem for multilinear operators with multiple weights, let alone a version with good constants, the sharp estimate  is unknown for any other choices of  $p_i$.
In this paper, we give a sharp estimate for the case of $p\geq 1$. Specifically, we prove the following Theorem, again we refer the reader to Figure 2 for a visualization of the bilinear case.
\begin{Theorem}\label{thm:main}
Let $T$ be a multilnear Calder\'{o}n-Zygmund operator, $\vec{P}=(p_1,\cdots,p_m)$ with $1<p_1,\cdots,p_m<\infty$ and $1/{p_1}+\cdots+1/{p_m}=1/p\leq1$.
If $\vec{w}=(w_1,\cdots, w_m)\in A_{\vec{P}}$, then
\begin{equation}\label{eq:eT}
  \|T(\vec{f})\|_{L^p(v_{\vec{w}})}\le C_{n,m,\vec{P},T}[\vec{w}]_{A_{\vec{P}}}^{\max(1,\frac{p_1'}{p},\ldots,\frac{p_m'}{p})}\prod_{i=1}^m\|f_i\|_{L^{p_i}(w_i)}.
\end{equation}
Moreover, for certain multilinear operators the exponent on $[\vec{w}]_{A_{\vec{P}}}$ is the best possible
\end{Theorem}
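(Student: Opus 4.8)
The plan is to dominate $T$ by sparse operators, dualize — this is the only place the hypothesis $p\ge1$ is used — reduce by a ``peeling'' argument to the case in which the exponent on $[\vec w]_{A_{\vec P}}$ equals $1$, and finally produce extremizing power weights. By Lerner's local mean oscillation decomposition (equivalently, by the sparse domination available for multilinear Calder\'on--Zygmund operators) it suffices to prove \eqref{eq:eT} with $T$ replaced by an arbitrary sparse operator $\mathcal A_{\mathcal S}(\vec f)=\sum_{Q\in\mathcal S}\big(\prod_{i=1}^m\langle |f_i|\rangle_Q\big)\chi_Q$, where $\langle h\rangle_Q:=|Q|^{-1}\int_Q h$, with the constant independent of the sparse family $\mathcal S$. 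Since $p\ge1$ we have $(L^p(v_{\vec w}))^*=L^{p'}(v_{\vec w})$, so by duality this is equivalent to
\[
\sum_{Q\in\mathcal S}\Big(\prod_{i=1}^m\langle f_i\rangle_Q\Big)\int_Q g\,v_{\vec w}\ \lesssim\ [\vec w]_{A_{\vec P}}^{\,\max(1,\,p_1'/p,\,\dots,\,p_m'/p)}\prod_{i=1}^m\|f_i\|_{L^{p_i}(w_i)},
\]
to be proved for all nonnegative $f_i$ and all nonnegative $g$ with $\|g\|_{L^{p'}(v_{\vec w})}=1$.

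Suppose first that $p_i<p'$ for some $i$; choose $j$ with $p_j=\min_i p_i$, so that $p_j'>p$. Peeling off the $j$-th factor,
\[
\sum_{Q\in\mathcal S}\Big(\prod_i\langle f_i\rangle_Q\Big)\int_Q g\,v_{\vec w}=\int f_j\cdot\mathcal A'_{\mathcal S}\big(g\,v_{\vec w},(f_i)_{i\ne j}\big)\le\|f_j\|_{L^{p_j}(w_j)}\big\|\mathcal A'_{\mathcal S}\big(g\,v_{\vec w},(f_i)_{i\ne j}\big)\big\|_{L^{p_j'}(\sigma_j)},
\]
where $\sigma_j=w_j^{1-p_j'}$ and $\mathcal A'_{\mathcal S}$ is the $m$-linear sparse operator, over the same family $\mathcal S$, with inputs $g\,v_{\vec w}$ and $(f_i)_{i\ne j}$. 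A short computation shows that $\mathcal A'_{\mathcal S}$ has exponent tuple $\vec Q=(p',(p_i)_{i\ne j})$, that $\big(v_{\vec w}^{1-p'},(w_i)_{i\ne j}\big)\in A_{\vec Q}$ with $A_{\vec Q}$-constant equal to $[\vec w]_{A_{\vec P}}^{\,p_j'/p}$, that the weight associated with $\vec Q$ is precisely $\sigma_j$, and — the point of choosing $j$ this way — that every entry of $\vec Q$ has conjugate exponent at most $p_j'$; thus the peeled estimate is again of the form \eqref{eq:eT}, now in the regime where the exponent is $1$. Granting that regime (next paragraph) and using $\|g\,v_{\vec w}\|_{L^{p'}(v_{\vec w}^{1-p'})}=\|g\|_{L^{p'}(v_{\vec w})}=1$ together with $p_j'/p=\max(1,p_1'/p,\dots,p_m'/p)$ here, \eqref{eq:eT} follows. (If instead $p_i\ge p'$ for all $i$, no peeling is needed and one is already in that regime.)

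It remains to prove the case in which $\sum_i1/p_i=1/p$ and $p_i\ge p'$ for every $i$ (equivalently $\max_i p_i'/p\le1$): namely $\|\mathcal A_{\mathcal S}(\vec f)\|_{L^p(v_{\vec w})}\lesssim[\vec w]_{A_{\vec P}}\prod_i\|f_i\|_{L^{p_i}(w_i)}$. This is the step I expect to be the main obstacle. After the substitution $f_i=h_i\sigma_i$ the dual form is $\sum_Q|Q|\langle v_{\vec w}\rangle_Q\prod_i\langle\sigma_i\rangle_Q\cdot\langle g\rangle_Q^{v_{\vec w}}\prod_i\langle h_i\rangle_Q^{\sigma_i}$ (where $\langle h\rangle_Q^\mu:=\mu(Q)^{-1}\int_Q h\,\mu$), and the elementary identity
\[
|Q|\,\langle v_{\vec w}\rangle_Q\prod_i\langle\sigma_i\rangle_Q=\Big(\langle v_{\vec w}\rangle_Q\prod_i\langle\sigma_i\rangle_Q^{\,p/p_i'}\Big)^{1/p}\,v_{\vec w}(Q)^{1/p'}\prod_i\sigma_i(Q)^{1/p_i}
\]
splits off a factor that is at most $[\vec w]_{A_{\vec P}}^{1/p}$. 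The remaining sum must now be controlled without any further power of $[\vec w]_{A_{\vec P}}$, and this cannot be achieved by the naive mixed $A_\infty$ estimates — those charge each weight $\sigma_i$ an $A_\infty$ (hence $\lesssim[\vec w]_{A_{\vec P}}^{p_i'/p}$) constant and produce, exactly as for the maximal function in Theorem~A, a non-sharp exponent when $m\ge2$. Instead one runs Calder\'on--Zygmund stopping-time decompositions of $g$ against $v_{\vec w}$ and of each $h_i$ against $\sigma_i$, and uses the lacunarity of the stopping values (in place of a Carleson embedding) so that only the ``extremal'' weight on a given generation is ever charged, the remaining averages being reabsorbed into the factor $[\vec w]_{A_{\vec P}}^{1/p}$ above. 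This is the multilinear, multi-weight analogue — valid for the full range $p_i\ge p'$ — of Lerner's proof of the $A_2$ theorem and of its version in \cite{DLP} for the single exponent $p_i=m+1$.

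Finally, for sharpness I would take an $m$-linear Calder\'on--Zygmund operator assembled from Riesz transforms (so that $|T(\vec f)|$ admits a usable pointwise lower bound on a suitable set), power weights $w_i(x)=|x|^{a_i}$ with the $a_i$ tending to the boundary of the region in which $\vec w\in A_{\vec P}$, and test functions $f_i(x)=|x|^{b_i}\chi_{B(0,1)}(x)$ with $b_i$ near the corresponding critical values; computing $[\vec w]_{A_{\vec P}}$ and the two sides of \eqref{eq:eT} as the parameters degenerate and matching the powers of the degeneration parameter shows that $\max(1,p_1'/p,\dots,p_m'/p)$ cannot be lowered. As in the linear theory the ``$1$'' is forced once all $p_i$ are large, while the term $p_j'/p$ is forced by the same weights that are extremal for the multilinear maximal function of Theorem~\ref{thm:m} when $p_j=\min_i p_i$ is small.
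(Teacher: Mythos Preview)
Your overall architecture---reduce to sparse operators, dualize when $p\ge1$, and use self-adjointness to ``peel'' into the regime $p\ge\max_i p_i'$---matches the paper exactly (the paper packages the peeling as the symmetry $[\vec w^i]_{A_{\vec P^i}}=[\vec w]_{A_{\vec P}}^{p_i'/p}$, Lemma~\ref{lm:w}). The sharpness sketch via $m$-linear Riesz transforms and power weights is also what the paper does.

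The gap is in the core estimate for the regime $p\ge\max_i p_i'$. Your identity
\[
|Q|\,\langle v_{\vec w}\rangle_Q\prod_i\langle\sigma_i\rangle_Q=\Big(\langle v_{\vec w}\rangle_Q\prod_i\langle\sigma_i\rangle_Q^{\,p/p_i'}\Big)^{1/p}\,v_{\vec w}(Q)^{1/p'}\prod_i\sigma_i(Q)^{1/p_i}
\]
is correct, but the conclusion you draw from it cannot be. If after splitting off $[\vec w]_{A_{\vec P}}^{1/p}$ the remaining sum were bounded by $C\|g\|_{L^{p'}(v_{\vec w})}\prod_i\|h_i\|_{L^{p_i}(\sigma_i)}$ with $C$ \emph{independent} of $\vec w$, you would have proved $\|\mathcal A_{\mathcal S}\|\lesssim[\vec w]_{A_{\vec P}}^{1/p}$. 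Since $p\ge\max_i p_i'>1$ in this regime, this exponent is strictly smaller than $1$, contradicting the sharpness you establish at the end. So whatever the stopping-time argument is meant to do, it cannot produce ``no further power of $[\vec w]_{A_{\vec P}}$''; and the description is too vague to salvage into something that yields exactly $[\vec w]_{A_{\vec P}}^{1/p'}$ more.

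The paper's argument here is much simpler than what you propose, and avoids stopping times entirely. Instead of extracting $[\vec w]_{A_{\vec P}}^{1/p}$, one extracts the full $[\vec w]_{A_{\vec P}}$:
\[
\frac{v_{\vec w}(Q)\prod_i\sigma_i(Q)}{|Q|^{m}}
=\underbrace{\frac{v_{\vec w}(Q)\prod_i\sigma_i(Q)^{p/p_i'}}{|Q|^{mp}}}_{\le\,[\vec w]_{A_{\vec P}}}\cdot\,|Q|^{m(p-1)}\prod_i\sigma_i(Q)^{1-p/p_i'}.
\]
Use sparseness to replace $|Q|^{m(p-1)}$ by $2^{m(p-1)}|E_Q|^{m(p-1)}$, apply H\"older to $|E_Q|=\int_{E_Q} v_{\vec w}^{1/mp}\prod_i\sigma_i^{1/mp_i'}$, and---this is precisely where $p\ge p_i'$ enters---observe that $1-p/p_i'\le 0$, so $\sigma_i(Q)^{1-p/p_i'}\le\sigma_i(E_Q)^{1-p/p_i'}$. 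Everything collapses to $v_{\vec w}(E_Q)^{1/p'}\prod_i\sigma_i(E_Q)^{1/p_i}$ times the weighted averages, and a single H\"older plus disjointness of the $E_Q$ finishes via the dyadic weighted maximal bound~\eqref{eq:em:p}. No Carleson embedding, no $A_\infty$ constants, no stopping times.
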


\begin{figure}
\begin{center}
\label{fig2}

\begin{tikzpicture}
		\draw[very thick,-] (0,0) -- (4.2,0);
		\draw[very thick,-] (0,0) -- (0,4.2);

		\draw[thick,-] (4,0) -- (4,4);
		\draw[thick,-] (0,4) -- (4,4);
		\draw[very thick,-] (0,4) -- (4,0);

		
		\draw[very thick,black,-] (0,2) -- (4/3,4/3);
		\draw[very thick,black,-] (4/3,4/3) -- (2,0);
		\draw[very thick,black,-] (4/3,4/3) -- (2,2);


	          \filldraw[black] (4/3,4/3) circle (2pt);
		\draw[very thick,-] (0,4) -- (4,0);

		\draw[] (0,4.5) node {\small$1/p_2$};

		\draw[] (4.7,0) node {\small$1/p_1$};
		\draw[] (3,2.5) node {\small{???}};
		\draw[] (.7,.7) node {\small$1$};
		\draw[] (2.5,.8) node {\small$\frac{p_1'}{p}$};
		\draw[] (.8,2.5) node {\small$\frac{p_2'}{p}$};

\end{tikzpicture}

\end{center}
\caption{The sharp exponents on $[\vec w]_{A_{\vec{P}}}$ for bilinear Calder\'on-Zygmund operators when $p\geq 1$}
\end{figure}
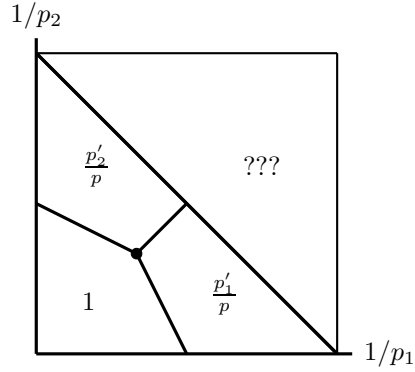

In order to prove Theorem \ref{thm:main} we will approximate multilinear Calder\'on-Zygmund operators by positive dyadic operators.  The result of Dami\'{a}n, Lerner and P\'{e}rez \cite{DLP} states the following (see Section \ref{sect2} for pertinent definition).
\begin{Theorem}\cite[Theorem 1.4]{DLP}\label{sparsebd}
Let $T$ be a multilinear Calder\'{o}n-Zygmund operator and let $\mathcal{X}$ be
a Banach function space over $\bbR^n$ equipped with Lebesgue measure. Then, for any
appropriate $\vec{f}$,
\begin{equation}\label{eq:tlesaX}
 \|T(\vec{f})\|_{\mathcal{X}}\le C_{T,m,n}\sup_{\mathscr{D},\mathcal{S}}\|A_{\mathscr{D},\mathcal{S}}(|\vec{f}|)\|_{\mathcal{X}}.
\end{equation}
\end{Theorem}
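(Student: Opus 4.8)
The plan is to adapt Lerner's local mean oscillation method, in the form used for the linear $A_2$ theorem, to the $m$-linear setting, thereby controlling $T$ by positive dyadic operators one cube at a time. Since $\mathcal{X}$ is a Banach function space and hence enjoys the Fatou property with respect to Lebesgue measure, and since for appropriate $\vec f$ each $f_i$ may be taken bounded with compact support, it suffices to bound $\|(T\vec f)\chi_{Q_0}\|_{\mathcal X}$ uniformly over cubes $Q_0\uparrow\bbR^n$ with $\supp f_i\subset Q_0$ for all $i$. For such a $Q_0$, Lerner's decomposition yields a sparse family $\mathcal{S}=\mathcal{S}(Q_0)$ of dyadic subcubes of $Q_0$ with
\[
 |T(\vec f)(x)-m_{T(\vec f)}(Q_0)|\le c_n\sum_{Q\in\mathcal{S}}\omega_\lambda(T(\vec f);Q)\chi_Q(x)\qquad\text{for a.e. }x\in Q_0,
\]
where $m_{T(\vec f)}(Q_0)$ is a median of $T(\vec f)$ over $Q_0$, $\lambda$ is a small dimensional constant, and $\omega_\lambda(g;Q)=\inf_c\big((g-c)\chi_Q\big)^*(\lambda|Q|)$ is the local mean oscillation. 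Below, $\langle h\rangle_E=\frac{1}{|E|}\int_E h$.

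The key step is a pointwise bound on $\omega_\lambda(T(\vec f);Q)$. Put $Q^*=cQ$ for a large dimensional $c$, split each $f_i=f_i\chi_{Q^*}+f_i\chi_{(Q^*)^c}$, and expand $T(\vec f)$ into $2^m$ terms. For the all-local term, the boundedness $L^1\times\cdots\times L^1\to L^{1/m,\infty}$ from \cite{GT} together with Chebyshev's inequality (taking $c=0$ in the infimum) gives $\omega_\lambda\big(T(f_1\chi_{Q^*},\dots,f_m\chi_{Q^*});Q\big)\lesssim\prod_{i=1}^m\langle|f_i|\rangle_{Q^*}$, with constant depending only on $n,m,T,\lambda$. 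For each remaining term at least one argument is supported off $Q^*$; decomposing that region into the annuli $3^{l+1}Q^*\setminus 3^lQ^*$ and using the kernel size bound, supplemented by the kernel smoothness bound in the variable $y_0$ when one estimates the oscillation (rather than the size) of a term, produces the geometrically decaying estimate
\[
 \omega_\lambda(T(\vec f);Q)\le c_{n,m,T}\sum_{l\ge 0}3^{-\delta l}\prod_{i=1}^m\langle|f_i|\rangle_{3^lQ^*}
\]
for some $\delta=\delta(n,m,\ep)>0$; since $\supp f_i\subset Q_0$ this sum is finite, and it is controlled by $\prod_i\langle|f_i|\rangle_{Q_0}$ whenever $Q$ is comparable to $Q_0$.

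It remains to turn $\sum_{Q\in\mathcal{S}}\omega_\lambda(T\vec f;Q)\chi_Q$ into honest sparse operators. For each fixed $l$, the sum $\sum_{Q\in\mathcal{S}}\big(\prod_i\langle|f_i|\rangle_{3^lQ^*}\big)\chi_Q$ is dominated in $\mathcal{X}$-norm by finitely many operators $A_{\mathscr{D}^{(t)},\mathcal{S}_t}(|\vec f|)$, by assigning to each $Q$ a cube of one of the $3^n$ adjacent dyadic grids $\mathscr{D}^{(t)}$ that contains $3^lQ^*$ and has comparable measure, and splitting the resulting sparse families; summing against $3^{-\delta l}$ converges. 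The median term is absorbed directly: the weak-type $(1,\dots,1)$ bound and $\supp f_i\subset Q_0$ give $|m_{T(\vec f)}(Q_0)|\lesssim\prod_i\langle|f_i|\rangle_{Q_0}$, so $|m_{T(\vec f)}(Q_0)|\,\|\chi_{Q_0}\|_{\mathcal X}\lesssim\sup_{\mathscr{D},\mathcal{S}}\|A_{\mathscr{D},\mathcal{S}}(|\vec f|)\|_{\mathcal X}$. Assembling the pieces and letting $Q_0\uparrow\bbR^n$ through the Fatou property gives \eqref{eq:tlesaX}.

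The step I expect to be the main obstacle is the interface between the oscillation estimate and this reorganization: the annular decay $3^{-\delta l}$, with $\delta$ as small as $\ep$, must be made to survive both the passage from $3^lQ^*$ to a comparable cube in an adjacent grid and the re-sorting into honestly sparse subfamilies, and the crude fact that the $3^l$-dilates of a sparse family form merely a $3^{ln}$-sparse family is too lossy — it must be combined with the compact-support reduction. An alternative that bypasses the tail altogether is to run Lerner's pointwise sparse domination instead: introduce the grand maximal truncated operator $\mathcal{M}_T(\vec f)(x)=\sup_{Q\ni x}\|T(\vec f\chi_{(3Q)^c})\|_{L^\infty(Q)}$, show it is of weak type $(1,\dots,1)$ via the same kernel bounds, and use it as a stopping function to build a sparse $\mathcal{S}$ with $|T(\vec f)(x)|\lesssim\sum_{Q\in\mathcal{S}}\big(\prod_i\langle|f_i|\rangle_{3Q}\big)\chi_Q(x)$ pointwise; taking $\mathcal{X}$-norms and replacing each $3Q$ by a comparable adjacent-grid cube then finishes immediately.
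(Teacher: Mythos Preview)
This theorem is not proved in the present paper; it is Theorem~1.4 of \cite{DLP}, quoted here verbatim and used as a black box, so there is no in-paper argument to compare your proposal against.

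That said, your outline is essentially the proof that \cite{DLP} carries out, transplanting Lerner's linear argument to the $m$-linear setting: the local mean oscillation formula on a large cube $Q_0$, the endpoint $L^1\times\cdots\times L^1\to L^{1/m,\infty}$ bound from \cite{GT} for the fully local piece, the kernel size and $y_0$-smoothness estimates for the remaining $2^m-1$ pieces producing the geometrically decaying tail $\sum_{l\ge0}2^{-\delta l}\prod_i\langle|f_i|\rangle_{2^lQ}$, and finally the passage to honest sparse operators via the $3^n$ shifted dyadic grids. The alternative route you sketch through the grand maximal truncation $\mathcal M_T$ and direct pointwise sparse domination is correct and considerably cleaner, but it is a later development of Lerner's that postdates \cite{DLP}; at the time of \cite{DLP} only the oscillation-formula route was available.

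The obstacle you isolate is indeed the technical crux of the argument, and your diagnosis that the naive ``$3^{ln}$-sparse'' bound is fatal on its own is correct. The resolution in \cite{DLP} (inherited from Lerner's linear paper) hinges on the observation that, for fixed $l$ and fixed shifted grid $\mathscr D_\alpha$, the cubes $Q\in\mathcal S$ whose dilate $3^lQ^*$ is assigned to a given container $P\in\mathscr D_\alpha$ all have the \emph{same} sidelength and are therefore pairwise disjoint; this collapses $\sum_{Q:P_Q=P}\chi_Q\le\chi_P$ and lets one replace the inner sum by a single term per container, after which one checks the resulting family of containers is genuinely sparse with constant independent of $l$. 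The compact-support reduction enters only to control the median and to justify the Fatou limit $Q_0\uparrow\bbR^n$, not to rescue the tail sum.
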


When $p\geq 1$,  $\mathcal{X}=L^p(v)$ is a Banach space.  However, for $0<p<1$ it is not.  Since the $m$-linear operators map into $L^p$ for $p>1/m$ we are are unable to obtain the full range.  It is an interesting question as to whether the same decomposition can be obtained for non Banach spaces such as $L^p$ for $0<p<1$.  Moreover, we believe that inequality \eqref{eq:eT} should hold for all $1<p_1,\ldots,p_m<\infty$.

We will actually prove the estimate in Theorem \ref{thm:main} for the sparse operators $A_{\D,\Sp}$.  For these operators, the main techniques are an extension of those found in \cite{M}, in which  the second author proved the sharp weighted bound for linear Calder\'on-Zygmund operators without extrapolation.

The rest of this article is devoted to the following.  In Section \ref{sect2} we state some brief preliminary material.  In Section \ref{sect4} we will prove the main estimates in Theorems \ref{thm:m} and \ref{thm:main} and in Section \ref{sect5} we will provide examples to show that our results are sharp.

\section{Preliminaries}\label{sect2}
Recall that the standard dyadic grid in $\bbR^n$ consists of the cubes
\[
  2^{-k}([0,1)^n+j),\quad k\in\bbZ, j\in\bbZ^n.
\]
Denote the standard grid by $\mathcal D$.

By a general dyadic grid $\mathscr{D}$ we mean a collection of cubes with the following
properties: (i) for any $Q\in\mathscr{D}$ its sidelength, $l_Q$, is of the form $2^k$, $k\in\bbZ$; (ii)
 $Q\cap R \in \{Q,R,\emptyset\}$ for any $Q,R\in\mathscr{D}$; (iii) the cubes of a fixed sidelength $2^k$ form a partition
of $\bbR^n$.

Now we define the dyadic maximal function with respect to arbitrary weight
 \[
   M_{w}^{\mathscr{D}}f(x)=\sup_{Q\ni x, Q\in \mathscr{D}}\frac{1}{w(Q)}\int_Q |f|w.
 \]
 It is well-known that
 \begin{equation}\label{eq:em:p}
 \|M_{w}^{\mathscr{D}}f\|_{L^p(w)}\le p'\|f\|_{L^p(w)},\quad\mbox{$1<p<\infty$},
 \end{equation}
we refer the readers to \cite{M} for a proof.

We will also need the notion of a sparse family of cubes.  Given a dyadic grid $\mathscr{D}$ we say that a family $\Sp$ is sparse if there are disjoint majorizing subsets, that is, for each $Q\in \Sp$ there exists $E_Q\subset Q$ such that $\{E_Q\}_{Q\in \Sp}$ is pairwise disjoint and $|E_Q|\geq \frac12|Q|$.  Sparse families have long played a role in Calder\'on-Zygmund theory, our definition can be found in \cite{Hy2}.  Finally, in \cite{DLP}, the multilinear sparse operators, 
\[
  A_{\mathscr{D},\mathcal{S}}(\vec{f})=\sum_{Q\in \Sp}\bigg( \prod_{i=1}^m \frac{1}{|Q|}\int_{Q}f_i\bigg)\chi^{}_{Q},
\]
were defined and used to approximate multlinear Calder\'on-Zygmund operators (see Theorem \ref{sparsebd}).  


\section{Proof of Theorems \ref{thm:m} and \ref{thm:main}} \label{sect4}
First, we give a proof for Theorem~\ref{thm:m}.
\begin{proof}[Proof of Theorem~\ref{thm:m}]
In \cite{DLP}, the authors proved that
there exists $2^n$ families of dyadic grids $\mathscr{D}_\beta$ such that
\[
  \mathcal{M}(\vec{f})(x)\le 6^{mn}\sum_{\beta=1}^{2^n}\mathcal{M}^{\mathscr{D}_\beta}(\vec{f})(x),
\]
where
\[
    \mathcal{M}^{\mathscr{D}_\beta}(\vec{f})(x)=\sup_{Q\ni x, Q\in\mathscr{D}_\beta
    }\prod_{i=1}^m\frac{1}{|Q|}\int_{Q}|f_i|.
\]

Without loss of generality, it suffices to prove that
\[
  \|\mathcal{M}^{\mathscr{D}}(\vec{f\sigma})\|_{L^p(v_{\vec{w}})}\le C_{m,n,\vec{P}} [\vec{w}]_{A_{\vec{P}}}^{\max_i(\frac{p_i}{p})}\prod_{i=1}^m \|f_i\|_{L^{p_i}(\sigma_i)}.
\]
for a general dyadic grid $\mathscr{D}$, and $\M^\D(\vec{f\sigma})=\M^\D(f_1\sigma_1,\ldots,f_m\sigma_m)$.  Moreover, it was shown in \cite[Lemma 2.2]{DLP} that there exists a sparse subset $\Sp\subset \mathscr D$ such that
$$\mathcal{M}^{\mathscr{D}}(\vec{f\sigma})\lesssim \sum_{Q\in \Sp}\prod_{i=1}^m\Big(\frac{1}{|Q|}\int_Q |f_i|\sigma_i\,\Big)\chi_{E_Q}.$$

Without loss of generality, assume that $p_1=\min\{p_1,\cdots,p_m\}$. We have
\begin{eqnarray*}
\lefteqn{\int_{\bbR^n}\mathcal{M}^{\mathscr{D}}(\vec{f\sigma})^pv_{\vec{w}} \lesssim \sum_{Q\in \Sp} \prod_{i=1}^m\Big(\frac{1}{|Q|}\int_Q |f_i|\sigma_i\,\Big)^pv_{\vec{w}}(Q)}\\
&=&\sum_{Q\in \Sp}\frac{v_{\vec{w}}(Q)^{p_1'}\prod_{i=1}^m\sigma_i(Q)^{pp_1'/{p_i'}}}{|Q|^{mpp_1'}}\bigg( \prod_{i=1}^m \int_{Q}|f_i|\sigma_i \bigg)^p\nonumber\\
&&\quad\cdot \frac{|Q|^{mp(p_1'-1)}}{v_{\vec{w}}(Q)^{p_1'-1}\prod_{i=1}^m\sigma_i(Q)^{pp_1'/{p_i'}}}\nonumber\\
&\leq& [\vec{w}]_{A_{\vec{P}}}^{p_1'}\sum_{Q\in \Sp}\frac{2^{mp(p_1'-1)}|E_Q|^{mp(p_1'-1)}}{v_{\vec{w}}(Q)^{p_1'-1}\prod_{i=1}^m\sigma_i(Q)^{pp_1'/{p_i'}}}\cdot \bigg( \prod_{i=1}^m \int_{Q}|f_i|\sigma_i \bigg)^p.\nonumber
\end{eqnarray*}
By H\"{o}lder's inequality, we have
\begin{eqnarray}
|E_Q|&=&\int_{E_Q}v_{\vec{w}}^{\frac{1}{mp}}\sigma_1^{\frac{1}{mp_1'}}\cdots\sigma_m^{\frac{1}{mp_m'}}\label{eq:h} \\
&\le& v_{\vec{w}}(E_Q)^{\frac{1}{mp}}\sigma_1(E_Q)^{\frac{1}{mp_1'}}\cdots\sigma_m(E_Q)^{\frac{1}{mp_m'}}\nonumber.
\end{eqnarray}
Therefore,
\[
  |E_Q|^{mp(p_1'-1)}\le v_{\vec{w}}(E_Q)^{p_1'-1}\sigma_1(E_Q)^{\frac{p(p_1'-1)}{p_1'}}\cdots\sigma_m(E_Q)^{\frac{p(p_1'-1)}{p_m'}}
\]
and
\[
  \frac{p(p_1'-1)}{p_i'}-\frac{p}{p_i}=\frac{pp_1'}{p_i'}-p\ge 0.
\]
Since $E_Q\subset Q$, we have
\[
  v_{\vec{w}}(E_Q)^{p_1'-1}\le v_{\vec{w}}(Q)^{p_1'-1}
\]
and hence
\[
  \sigma_i(E_Q)^{\frac{p(p_1'-1)}{p_i'}-\frac{p}{p_i}}\le  \sigma_i(Q)^{\frac{pp_1'}{p_i'}-p},\quad i=1,\cdots,m.
\]
It follows that
\begin{eqnarray*}
\lefteqn{\sum_{Q\in \Sp}\frac{|E_Q|^{mp(p_1'-1)}}{v_{\vec{w}}(Q)^{p_1'-1}\prod_{i=1}^m\sigma_i(Q)^{pp_1'/{p_i'}}}
\cdot \bigg( \prod_{i=1}^m \int_{Q}|f_i|\sigma_i\bigg)^p}\\
&\le&\sum_{Q\in \Sp}\prod_{i=1}^m
\bigg(  \frac{1}{\sigma_i(Q)}\int_{Q}|f_i|\sigma_i\bigg)^p \sigma_i(E_Q)^{p/{p_i}}\nonumber\\
&\le&\prod_{i=1}^m\left( \sum_{Q\in \Sp}\bigg(\frac{1}{\sigma_i(Q)}\int_{Q}|f_i|\sigma_i \bigg)^{p_i} \sigma_i(E_{Q})\right)^{p/{p_i}}\nonumber\\
&\le&\prod_{i=1}^m\|M_{\sigma_i}^{\D}(f_i)\|_{L^{p_i}(\sigma_i)}^p\nonumber\\
&\lesssim&\prod_{i=1}^m\|f_i\|_{L^{p_i}(\sigma_i)}^p.\nonumber
\end{eqnarray*}
Hence
\[
  \|\mathcal{M}^{\D}(\vec{f})\|_{L^p(v_{\vec{w}})}\le C_{m,n,\vec{P}} [\vec{w}]_{A_{\vec{P}}}^{\max_i(\frac{p_i'}{p})}\prod_{i=1}^m \|f_i\|_{L^{p_i}(w_i)}.
\]
This completes the proof.
\end{proof}

We now turn our attention to the proof of Theorem~\ref{thm:main}.  First, we note the following symmetry of $A_{\vec{P}}$ weights.
\begin{Lemma}\label{lm:w}
Suppose that $\vec{w}=(w_1,\cdots,w_m)\in A_{\vec{P}}$ and that $1<p$, $p_1$, $\cdots$, $p_m<\infty$ with $1/{p_1}+\cdots+1/{p_m}=1/p$.
Then $\vec{w}^i:=(w_1$, $\cdots$, $w_{i-1}$, $v_{\vec{w}}^{1-p'}$, $w_{i+1}$, $\cdots$, $w_m)\in A_{\vec{P}^i}$ with $\vec{P}^i=(p_1,\cdots, p_{i-1}, p', p_{i+1},\cdots,p_m)$ and
\[
 [\vec{w}^i]_{A_{\vec{P}^i}}=[\vec{w}]_{A_{\vec{P}}}^{p_i'/p}.
\]
\end{Lemma}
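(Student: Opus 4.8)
The plan is to verify the identity by a direct computation, since the $A_{\vec{P}}$ condition is at heart an algebraic relation among the averages $\frac1{|Q|}\int_Q w_i^{1-p_i'}$ and $\frac1{|Q|}\int_Q v_{\vec{w}}$. The first step is to identify the Lebesgue exponent playing the role of ``$p$'' for the tuple $\vec{P}^i$: summing reciprocals, $\sum_{j\neq i}\frac1{p_j}+\frac1{p'}=\frac1p-\frac1{p_i}+\frac1{p'}=1-\frac1{p_i}=\frac1{p_i'}$, so that exponent is $p_i'$, and $\vec{P}^i$ is admissible because $p'>1$ and each $p_j>1$. Hence $v_{\vec{w}^i}=\big(v_{\vec{w}}^{1-p'}\big)^{p_i'/p'}\prod_{j\neq i}w_j^{p_i'/p_j}$.

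Next I would simplify the two ingredients that enter the $A_{\vec{P}^i}$ constant. Using the elementary identity $\frac{1-p'}{p'}=\frac1{p'}-1=-\frac1p$, the first factor above equals $v_{\vec{w}}^{-p_i'/p}=\prod_{k=1}^m w_k^{-p_i'/p_k}$; multiplying by $\prod_{j\neq i}w_j^{p_i'/p_j}$ cancels every term except $k=i$, leaving $v_{\vec{w}^i}=w_i^{-p_i'/p_i}=w_i^{1-p_i'}$ (since $p_i'/p_i=p_i'-1$). I would also record that the dual weight occupying the $i$-th slot of $\vec{w}^i$ is $v_{\vec{w}}$ itself: because $(p')'=p$ and $(1-p)(1-p')=1-p-p'+pp'=1$ (using $pp'=p+p'$), one gets $\big(v_{\vec{w}}^{1-p'}\big)^{1-p}=v_{\vec{w}}$. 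The finiteness of these averages, which will drop out of the final identity, also confirms that $\vec{w}^i$ is a genuine tuple of weights.

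Finally I would substitute these facts into the definition of $[\vec{w}^i]_{A_{\vec{P}^i}}$. The exponent attached to the $i$-th factor is $p_i'/(p')'=p_i'/p$ and to the $j$-th factor ($j\neq i$) it is $p_i'/p_j'$, so
\[
 [\vec{w}^i]_{A_{\vec{P}^i}}=\sup_Q\Big(\tfrac1{|Q|}\int_Q w_i^{1-p_i'}\Big)\Big(\tfrac1{|Q|}\int_Q v_{\vec{w}}\Big)^{p_i'/p}\prod_{j\neq i}\Big(\tfrac1{|Q|}\int_Q w_j^{1-p_j'}\Big)^{p_i'/p_j'}.
\]
On the other hand, raising the defining supremum of $[\vec{w}]_{A_{\vec{P}}}$ to the power $p_i'/p$ and separating the $k=i$ factor, whose exponent becomes $\frac{p}{p_i'}\cdot\frac{p_i'}{p}=1$, produces exactly the expression on the right. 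This yields $[\vec{w}^i]_{A_{\vec{P}^i}}=[\vec{w}]_{A_{\vec{P}}}^{p_i'/p}<\infty$, so $\vec{w}^i\in A_{\vec{P}^i}$, which is the claim.

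There is no serious obstacle here; the entire argument turns on the two identities $\frac{1-p'}{p'}=-\frac1p$ and $(1-p)(1-p')=1$, and the only point requiring care is keeping the many conjugate exponents straight through the substitution.
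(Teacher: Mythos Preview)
Your proof is correct and follows essentially the same approach as the paper: a direct computation that (i) identifies the target exponent of $\vec{P}^i$ as $p_i'$, (ii) shows $v_{\vec{w}^i}=w_i^{1-p_i'}$, and (iii) substitutes into the definition of the $A_{\vec{P}^i}$ constant to recognise it as $[\vec{w}]_{A_{\vec{P}}}^{p_i'/p}$. The paper's proof is terser, omitting the explicit verification that $(v_{\vec{w}}^{1-p'})^{1-p}=v_{\vec{w}}$ and the exponent bookkeeping you spell out, but the argument is the same.
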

\begin{proof}
We will prove the conclusion for $i=1$; the other cases are similar.
Notice that
\[
  1/{p'}+1/{p_{2}}+\cdots+1/{p_m}=1/{p_1'}
\]
and
\[
v_{\vec{w}}^{(1-p')p_1'/{p'}}\cdot w_{2}^{p_1'/{p_{2}}}\cdots w_m^{p_1'/p_m}=w_1^{1-p_1'}.
\]
By the definition of multiple $A_{\vec{P}}$ constant, we have
\begin{eqnarray*}
[\vec{w}^1]_{A_{\vec{P}^1}}&=&\sup_Q \left(\frac{1}{|Q|}\int_Q w_1^{1-p_1'}\right)\cdot\left(\frac{1}{|Q|}\int_Q (v_{\vec{w}}^{1-p'})^{1-p}\right)^{p_1'/p}\\
&&\qquad \times
\prod_{i=2}^m\left( \frac{1}{|Q|}\int_Q w_i^{1-p_i'}\right)^{p_1'/{p_i'}}\\
&=&[\vec{w}]_{A_{\vec{P}}}^{p_1'/p}.
\end{eqnarray*}
\end{proof}
By Theorem \ref{sparsebd} we reduce our problem to consider the behavior of the operator $A_{\mathscr{D},\mathcal{S}}$.  For these operators we have the following Theorem, which holds for all $1<p_1,\ldots,p_m<\infty$.

\begin{Theorem}\label{lm:l1}
Suppose that $1<p_1,\cdots,p_m$ $<\infty$ with $1/{p_1}+\cdots+1/{p_m}=1/p$ and $\vec{w}\in A_{\vec{P}}$. Then
\[
  \|A_{\mathscr{D},\mathcal{S}}(\vec{f})\|_{L^p(v_{\vec{w}})}\lesssim [\vec{w}]^{\max(1,\frac{p_1'}{p},\ldots,\frac{p_m'}{p})}_{A_{\vec{P}}}\prod_{i=1}^m\|f_i\|_{L^{p_i}(w_i)}.
\]
\end{Theorem}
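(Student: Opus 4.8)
The plan is to extend the duality argument of \cite{M} from the linear to the multilinear setting, treating three ranges of $p$ separately. Begin with the regime $p\le 1$, where $A_{\D,\Sp}$ is not bounded on a normed space and one argues directly: by subadditivity of $t\mapsto t^p$ we have $A_{\D,\Sp}(\vec f)^p\le\sum_{Q\in\Sp}\prod_{i=1}^m\bigl(\tfrac1{|Q|}\int_Q|f_i|\bigr)^p\chi_Q$ pointwise, hence $\|A_{\D,\Sp}(\vec f)\|_{L^p(v_{\vec w})}^p\le\sum_{Q\in\Sp}\prod_{i=1}^m\bigl(\tfrac1{|Q|}\int_Q|f_i|\bigr)^pv_{\vec w}(Q)$. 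Writing $f_i=h_i\sigma_i$ with $\sigma_i=w_i^{1-p_i'}$, the right-hand side is precisely the quantity estimated in the proof of Theorem~\ref{thm:m} (the sparseness of $\Sp$ furnishing the disjoint sets $E_Q$ used there), which gives $\lesssim[\vec w]_{A_{\vec P}}^{p_1'}\prod_i\|f_i\|_{L^{p_i}(w_i)}^p$ once we relabel so that $p_1=\min_i p_i$. Taking $p$-th roots and using $p\le 1<p_i'$ (so that $p_1'/p=\max(1,p_1'/p,\dots,p_m'/p)$) settles this range.

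Next comes the base regime $p>1$ with $p\ge\max_i p_i'$, in which the claimed exponent is $1$. By duality, $\|A_{\D,\Sp}(\vec f)\|_{L^p(v_{\vec w})}=\sup\sum_{Q\in\Sp}\prod_{i=1}^m\langle f_i\rangle_Q\int_Q g\,v_{\vec w}$, the supremum over $0\le g$ with $\|g\|_{L^{p'}(v_{\vec w})}\le1$ and $\langle\,\cdot\,\rangle_Q$ the unweighted average. Put $f_i=h_i\sigma_i$, $\langle h_i\rangle_Q^{\sigma_i}=\sigma_i(Q)^{-1}\int_Q h_i\sigma_i$, $\langle g\rangle_Q^{v_{\vec w}}=v_{\vec w}(Q)^{-1}\int_Q g\,v_{\vec w}$, so a term of the sum equals $\bigl(|Q|\langle v_{\vec w}\rangle_Q\prod_i\langle\sigma_i\rangle_Q\bigr)\,\langle g\rangle_Q^{v_{\vec w}}\prod_i\langle h_i\rangle_Q^{\sigma_i}$. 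I would split $[\vec w]_{A_{\vec P}}$ off the cube mass via $\langle v_{\vec w}\rangle_Q\prod_i\langle\sigma_i\rangle_Q^{p/p_i'}\le[\vec w]_{A_{\vec P}}$; since $p\ge p_i'$ the residual exponents $1-p/p_i'$ are $\le 0$, so the monotonicity $\sigma_i(E_Q)\le\sigma_i(Q)$ combined with $|Q|\le2|E_Q|$ and then the Hölder bound $|E_Q|\le v_{\vec w}(E_Q)^{1/(mp)}\prod_i\sigma_i(E_Q)^{1/(mp_i')}$ of \eqref{eq:h} yields, after the elementary simplifications $\tfrac{p-1}{p_i'}+1-\tfrac{p}{p_i'}=\tfrac1{p_i}$ and $\tfrac{p-1}{p}=\tfrac1{p'}$,
\[
|Q|\,\langle v_{\vec w}\rangle_Q\prod_{i=1}^m\langle\sigma_i\rangle_Q\ \lesssim\ [\vec w]_{A_{\vec P}}\,|E_Q|^{m(p-1)}\prod_{i=1}^m\sigma_i(E_Q)^{1-p/p_i'}\ \lesssim\ [\vec w]_{A_{\vec P}}\,v_{\vec w}(E_Q)^{1/p'}\prod_{i=1}^m\sigma_i(E_Q)^{1/p_i}.
\]
Hölder's inequality over $Q\in\Sp$ with exponents $p',p_1,\dots,p_m$, followed by the disjointness of $\{E_Q\}$ and the maximal bounds \eqref{eq:em:p} for $M_{v_{\vec w}}^\D$ and the $M_{\sigma_i}^\D$, then bounds the sum by $\lesssim[\vec w]_{A_{\vec P}}\|g\|_{L^{p'}(v_{\vec w})}\prod_i\|h_i\|_{L^{p_i}(\sigma_i)}\le[\vec w]_{A_{\vec P}}\prod_i\|f_i\|_{L^{p_i}(w_i)}$.

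Finally, for $1<p<\max_i p_i'$ the exponent is $p_1'/p$ with $p_1=\min_i p_i$; the computation above breaks because some $1-p/p_i'$ are positive, and the remedy is to transfer the offending weight to the target side. Relabelling so $p_1=\min_i p_i$, a slot-by-slot rewriting (using the symmetry of $A_{\D,\Sp}$) gives $\int A_{\D,\Sp}(\vec f)\,g\,v_{\vec w}=\int f_1\cdot A_{\D,\Sp}(f_2,\dots,f_m,g\,v_{\vec w})$ for $0\le g$, hence $\|A_{\D,\Sp}(\vec f)\|_{L^p(v_{\vec w})}\le\|f_1\|_{L^{p_1}(w_1)}\sup_g\|A_{\D,\Sp}(f_2,\dots,f_m,g\,v_{\vec w})\|_{L^{p_1'}(w_1^{1-p_1'})}$. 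The operator on the right is the $m$-linear sparse operator for the exponent vector $\vec P^1=(p',p_2,\dots,p_m)$ (with associated Lebesgue exponent $p_1'$) and the weight $\vec w^1=(v_{\vec w}^{1-p'},w_2,\dots,w_m)$, which by Lemma~\ref{lm:w} belongs to $A_{\vec P^1}$ with $[\vec w^1]_{A_{\vec P^1}}=[\vec w]_{A_{\vec P}}^{p_1'/p}$ and $v_{\vec w^1}=w_1^{1-p_1'}$. Since $p_1'=\max_i p_i'\ge p_i'$ for $i\ge2$ and $p_1'>p$, the data $(\vec P^1,\vec w^1)$ falls under the base regime already handled, so $\|A_{\D,\Sp}(f_2,\dots,f_m,g\,v_{\vec w})\|_{L^{p_1'}(w_1^{1-p_1'})}\lesssim[\vec w^1]_{A_{\vec P^1}}\|g\,v_{\vec w}\|_{L^{p'}(v_{\vec w}^{1-p'})}\prod_{i=2}^m\|f_i\|_{L^{p_i}(w_i)}$; as $\|g\,v_{\vec w}\|_{L^{p'}(v_{\vec w}^{1-p'})}=\|g\|_{L^{p'}(v_{\vec w})}\le1$, the two estimates combine to give $\|A_{\D,\Sp}(\vec f)\|_{L^p(v_{\vec w})}\lesssim[\vec w]_{A_{\vec P}}^{p_1'/p}\prod_i\|f_i\|_{L^{p_i}(w_i)}$, and $p_1'/p=\max(1,p_1'/p,\dots,p_m'/p)$ here.

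I expect the main obstacle to be the base regime: one must peel $[\vec w]_{A_{\vec P}}$ off the cube mass $|Q|\langle v_{\vec w}\rangle_Q\prod_i\langle\sigma_i\rangle_Q$ in such a way that the residual powers of $\langle\sigma_i\rangle_Q$ carry the right sign, which is exactly what makes the replacements $\sigma_i(Q)\mapsto\sigma_i(E_Q)$ legitimate and, after the Hölder estimate for $|E_Q|$, collapses all exponents to the conjugate tuple $(1/p_1,\dots,1/p_m,1/p')$ demanded by \eqref{eq:em:p}. This is precisely why the hypothesis $p\ge\max_i p_i'$ enters there, and the role of Lemma~\ref{lm:w} in the last regime is just to trade a ``bad'' source exponent for the benign target exponent $p_1'$, reducing matters to the base regime.
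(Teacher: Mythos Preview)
Your proposal is correct and follows essentially the same approach as the paper's own proof: the same three-case split ($p\le1$ reduced to the computation in Theorem~\ref{thm:m}; the base regime $p\ge\max_ip_i'$ handled by dualizing and peeling off $[\vec w]_{A_{\vec P}}$ via the H\"older bound \eqref{eq:h} on $|E_Q|$; and the remaining range $1<p<\max_ip_i'$ handled by self-adjointness together with Lemma~\ref{lm:w}). The only differences are expository---you spell out the duality swap in the third case more explicitly and record the exponent bookkeeping in the second---but the substance is identical.
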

\begin{proof} We first consider the case when $\frac1m<p\le1$.  In this case
$$\int_{\bbR^n}A_{\D,\Sp}(\vec{f})^pv_{\vec{w}}\leq\sum_{Q\in \Sp}\Big(\prod_{i=1}^m\frac{1}{|Q|}\int_Q f_i\Big)^pv_{\vec{w}}(Q),$$
which can be handled in exactly the same manner as the estimates in proof of Theorem \ref{thm:m}.

Now consider the case $p\geq \max_i p_i'$.  It is sufficient to prove that
\[
  \|A_{\mathscr{D},\mathcal{S}}(\vec{f\sigma})\|_{L^p(v_{\vec{w}})}\lesssim [\vec{w}]_{A_{\vec{P}}}\prod_{i=1}^m\|f_i\|_{L^{p_i}(\sigma_i)},
\]
where $\sigma_i=w_i^{1-p_i'}$, $A_{\mathscr{D},\mathcal{S}}(\vec{f\sigma})= A_{\mathscr{D},\mathcal{S}}(f_1\sigma_1,\cdots, f_m\sigma_m)$, and $f_i\geq 0$.
By duality, it suffices to estimate the $(m+1)$-linear form
\begin{equation*}
  \int_{\bbR^n} A_{\mathscr{D},\mathcal{S}}(\vec{f\sigma})g v_{\vec{w}}=
\sum_{Q\in \Sp} \int_{Q} g v_{\vec{w}}\cdot \prod_{i=1}^m \frac{1}{|Q|}\int_{Q}f_i\sigma_i
\end{equation*}
for $g\ge 0$ belonging to $L^{p'}(v_{\vec{w}})$.
We have
\begin{eqnarray*}
\lefteqn{\sum_{Q\in \Sp} \int_{Q}g v_{\vec{w}}\cdot\prod_{i=1}^m \frac{1}{|Q|}\int_{Q}f_i\sigma_i }\\
&=&\sum_{Q\in\Sp} \frac{v_{\vec{w}}(Q)\prod_{i=1}^m \sigma_i(Q)^{p/{p_i'}}}{|Q|^{mp}}\cdot
\frac{|Q|^{m(p-1)}}{v_{\vec{w}}(Q)\prod_{i=1}^m \sigma_i(Q)^{p/{p_i'}}}\\
&&\quad\cdot \int_{Q}g v_{\vec{w}}\cdot\prod_{i=1}^m \int_{Q}f_i\sigma_i \\
&\le&[\vec{w}]_{A_{\vec{P}}}\sum_{Q\in \Sp}\frac{|Q|^{m(p-1)}}{v_{\vec{w}}(Q)\prod_{i=1}^m \sigma_i(Q)^{p/{p_i'}}}
\cdot \int_{Q}g v_{\vec{w}}\cdot\prod_{i=1}^m \int_{Q}f_i\sigma_i \\
&\le&[\vec{w}]_{A_{\vec{P}}}\sum_{Q\in \Sp}\frac{2^{m(p-1)}|E_Q|^{m(p-1)}}{v_{\vec{w}}(Q)\prod_{i=1}^m \sigma_i(Q)^{p/{p_i'}}}
\cdot \int_{Q}g v_{\vec{w}}\cdot \prod_{i=1}^m \int_{Q}f_i\sigma_i .
\end{eqnarray*}
By $(\ref{eq:h})$,
\[
  |E_Q|\le
 v_{\vec{w}}(E_Q)^{\frac{1}{mp}}\sigma_1(E_Q)^{\frac{1}{mp_1'}}\cdots\sigma_m(E_Q)^{\frac{1}{mp_m'}}.
\]
Since $p\ge \max_i\{p_i'\}$ and $E_Q\subset Q$, we have $\sigma_i(Q)^{1-\frac{p}{p_i'}}\le \sigma_i(E_Q)^{1-\frac{p}{p_i'}}$ for any $i=1,\cdots,m$.
Therefore,
\begin{eqnarray*}
&&\sum_{Q\in \Sp} \int_{Q}g v_{\vec{w}}\cdot \prod_{i=1}^m \frac{1}{|Q|}\int_{Q}f_i\sigma_i \\
&\le& 2^{m(p-1)}[\vec{w}]_{A_{\vec{P}}}\sum_{j,k}v_{\vec{w}}(E_Q)^{\frac{1}{p'}}\prod_{i=1}^m \sigma_i(E_Q)^{\frac{p-1}{p_i'}}\sigma_i(Q)^{1-\frac{p}{p_i'}}
   \\
&&\quad\cdot \frac{1}{v_{\vec{w}}(Q)}\int_{Q}g v_{\vec{w}}\cdot\prod_{i=1}^m \frac{1}{\sigma_i(Q)}\int_{Q}f_i\sigma_i \\
&\le& 2^{m(p-1)}[\vec{w}]_{A_{\vec{P}}}\sum_{Q\in \Sp}v_{\vec{w}}(E_Q)^{\frac{1}{p'}}\prod_{i=1}^m \sigma_i(E_Q)^{\frac{1}{p_i}}
\frac{1}{v_{\vec{w}}(Q)}\int_{Q}g v_{\vec{w}}\\
&&\quad\cdot \prod_{i=1}^m \frac{1}{\sigma_i(Q)}\int_{Q}f_i\sigma_i \\
&\le& 2^{m(p-1)}[\vec{w}]_{A_{\vec{P}}}\left( \sum_{Q\in \Sp}\bigg(\frac{1}{v_{\vec{w}}(Q)}\int_{Q}g v_{\vec{w}}\bigg)^{p'}v_{\vec{w}}(E_Q)\right)^{1/{p'}}\\
&&\quad\cdot \prod_{i=1}^m\left( \sum_{Q\in \Sp}\bigg(\frac{1}{\sigma_i(Q)}\int_{Q}f_i \sigma_i \bigg)^{p_i}\sigma_i(E_Q)\right)^{1/{p_i}}\\
&\le& 2^{m(p-1)}[\vec{w}]_{A_{\vec{P}}}\|M_{v_{\vec{w}}}^{\mathscr{D}}(g)\|_{L^{p'}(v_{\vec{w}})}\prod_{i=1}^m \|M_{\sigma_i}^{\mathscr{D}}(f_i)\|_{L^{p_i}(\sigma_i)}\\
&\lesssim& 2^{m(p-1)}[\vec{w}]_{A_{\vec{P}}}\|g\|_{L^{p'}(v_{\vec{w}})}\prod_{i=1}^m \|f_i\|_{L^{p_i}(\sigma_i)},
\end{eqnarray*}
where $(\ref{eq:em:p})$ is used in the last step.  For the other cases we use duality.  Notice that the operator $A_{\D,\Sp}$ is self adjoint as a multilinear operator, in the sense that for any $i$, $i=1,\ldots,m$, we have
$$\int_{\bbR^n} A_{\D,\Sp}(f_1,\ldots,f_m)g= \int_{\bbR^n} A_{\D,\Sp}(f_1,\ldots,f_{i-1},g,f_{i+1},\ldots f_m)f_i.$$
Without loss of generality suppose $p_1'\geq \max(p,p_2',\ldots,p_m')$.  Hence, by duality and self adjiontness we have
\begin{align*}\|A_{\D,\Sp}\|_{L^{p_1}(w_1)\times\cdots\times L^{p_m}(w_m)\rightarrow L^p(v_{\vec{w}})}&=\|A_{\D,\Sp}\|_{L^{p'}(v_{\vec{w}}^{1-p'})\times\cdots\times L^{p_m}(w_m)\rightarrow L^{p_1'}(w_1^{1-p_1'})}\\
&\lesssim [\vec{w}^1]_{\vec{P}^1}= [\vec{w}]_{A_{\vec{P}}}^{\frac{p_1'}{p}}.
\end{align*}
\end{proof}

\section{Examples} \label{sect5}

Finally, we end with some examples to show that our bounds are sharp.  First we show that Theorem \ref{thm:m} is sharp.  Consider the case $m=2$ (we leave it to the reader to modify the example for $m>2$) and suppose that we had a better exponent than the one in inequality \ref{eq:emm}, that is, suppose
\begin{equation}\label{sharp}\|\M\|_{L^{p_1}(w_1)\times L^{p_2}(w_2) \rightarrow L^p(v_{\vec{w}})}\lesssim [\vec{w}]_{A_{\vec{P}}}^{r\max(\frac{p_1'}{p},\frac{p_2'}{p})}\end{equation}
for some $r<1$.  Further suppose that $p_1'\geq p_2'$.  For $0<\ep<1$, let $f_1(x)=|x|^{\ep-n}\chi^{}_{B(0,1)}(x)$, $f_2(x)=|x|^{\frac{\ep-n}{p_2}}\chi_{B(0,1)}(x)$, $w_1(x)=|x|^{(n-\ep)(p_1-1)}$ and $w_2(x)=1$.  Calculations show that
$$\|f_1\|_{L^{p_1}(w_1)}\simeq\ep^{-{1}/{p_1}}, \|f_2\|_{L^{p_2}(w_2)}\simeq\ep^{-1/p_2}, v_{\vec{w}}(x)=|x|^{(n-\ep)\frac{p}{p_1'}}$$
and
$$[\vec{w}]_{A_{\vec{P}}}\simeq\ep^{-{p}/{p_1'}}.$$
For $x\in B(0,1)$ we have
\begin{align*}
\M(f_1,f_2)(x)&\gtrsim\frac{1}{|x|^n}\int_{B(0,|x|)} |y_1|^{\ep-n}\,dy_1 \cdot \frac{1}{|x|^n}\int_{B(0,|x|)} |y_2|^{\frac{\ep-n}{p_2}}\,dy_2\\
&\gtrsim\frac{f_1(x)f_2(x)}{\ep\cdot(\frac{\ep-n}{p_2}+n)}\gtrsim \frac{f_1(x)f_2(x)}{\ep}.
\end{align*}
Hence,
\begin{align*}\|\M(f_1,f_2)\|_{L^p(v_{\vec{w}})}&\gtrsim \frac{1}{\ep}\Big(\int_{B(0,1)}|x|^{(\ep-n)(p+\frac{p}{p_2}-\frac{p}{p_1'})}\,dx\Big)^{1/p}\\
&\simeq\frac1\ep\Big(\int_0^1x^{\ep-1}\,dx\Big)^{1/p}\\
&=\frac{1}{\ep}\Big(\frac{1}{\ep}\Big)^{1/p}.
\end{align*}
Combining this with inequality \eqref{sharp} we see for some $r<1$,
$$\Big(\frac1\ep\Big)^{1+\frac1p}\lesssim \Big(\frac1\ep\Big)^{r+\frac1p},$$
which is impossible as $\ep\rightarrow 0$.

 Next we show that Theorem \ref{thm:main} is sharp.
  Recall that for $i=1,\cdots,n$, the $m$-linear $i$th Riesz transform is defined by
\[
  R_i(\vec{f})(x)=p.v.\int_{(\bbR^n)^m}\frac{\sum_{j=1}^m(x_i-(y_j)_i)}{(\sum_{j=1}^m|x-y_j|^2)^{(nm+1)/2}}f_1(y_1)\cdots f_m(y_m)dy_1\cdots dy_m,
\]
where $(y_j)_i$ denotes the $i$th coordinate of $y_j$.

Suppose that $m=2$, $p_1'\ge p_2'$ and $p_1'\ge p$.
Let
\begin{eqnarray*}
  U&=&\{x\in\bbR^n: |x|\le 1,0<x_i\le x_1,i=2,\cdots,n\}, \\
   V &=& \{x\in\bbR^n:\, |x|\le 1, x_i\le 0, i=1,\cdots,n \}.
\end{eqnarray*}
For $0<\ep<1$, let $f_1(x)=|x|^{\ep-n}\chi^{}_V(x)$, $f_2(x)=|x|^{\frac{\ep-n}{p_2}}\chi^{}_V(x)$, $w_1(x)=|x|^{(n-\ep)(p_1-1)}$ and $w_2(x)=1$.  For $x\in U$ and $y_j\in V$ with $|y_j|\le |x|$, we have
\[
  \frac{\sum_{j=1}^2(x_1-(y_j)_1)}{(\sum_{j=1}^2|x-y_j|^2)^{1/2}}
  \ge\frac{2\frac{|x|}{\sqrt{n}}}{4|x|}\gtrsim 1.
\]
Therefore,
\[
    \frac{\sum_{j=1}^2(x_1-(y_j)_1)}{(\sum_{j=1}^2|x-y_j|^2)^{(2n+1)/2}}\gtrsim \frac{1}{|x|^{2n}}.
\]
It follows that
\begin{eqnarray*}
R_1(\vec{f})(x)&=&p.v.\int_{(\bbR^n)^2}\frac{\sum_{j=1}^2(x_i-(y_j)_i)}{(\sum_{j=1}^2|x-y_j|^2)^{(2n+1)/2}}f_1(y_1) f_2(y_2)dy_1dy_2\\
&\gtrsim&\int_{|y_1|\le |x|}\int_{|y_2|\le |x|}\frac{1}{|x|^{2n}}
|y_1|^{\ep-n}\cdot|y_2|^{\frac{\ep-n}{p_2}}dy_1dy_2\\
&\gtrsim& \frac{1}{\ep}f_1(x)f_2(x).
\end{eqnarray*}
Hence
\begin{eqnarray*}
\|R_1(\vec{f})\|_{L^p(v_{\vec{w}})}&\gtrsim& \frac{1}{\ep}\left(\int_U |x|^{(\ep-n)(p+p/{p_2}-p/{p_1'})}dx\right)^{1/p}\\
&=& \frac{1}{\ep}\left(\int_U |x|^{\ep-n}dx\right)^{1/p}\\
&\gtrsim& \frac{1}{\ep}\left(\int_{\{|x|\le 1\}} |x|^{\ep-n}dx\right)^{1/p}\quad\mbox{(by symmetry)}\\
&\gtrsim& (\frac{1}{\ep})^{1+1/p}.
\end{eqnarray*}
Then by similar arguments as the above we can show that the exponent is sharp when $\max( p_1',p_2')\ge p\ge 1$.  When $p>\max(p_1',p_2')$. Again, suppose that $p_1'\ge p_2'$. We consider the adjoint in the first variable, $(R_1)^{1,*}$. Notice that
\[
  (R_1)^{1,*}(f_1,f_2)(x)=\int_{(\bbR^n)^2}\frac{2(y_1)_1-x_1-(y_2)_1}{(|x-y_1|^2+|y_1-y_2|^2)^{(2n+1)/2}}
  f(y_1)f(y_2)dy_1dy_2.
\]
Let
\begin{eqnarray*}
  U_1&=&\{x\in\bbR^n: |x|\le 1,x_1\le x_i< 0 ,i=2,\cdots,n\}, \\
   V_1 &=& \{x\in\bbR^n:\, |x|\le 1, x_i\ge 0, i=1,\cdots,n \}.
\end{eqnarray*}
For $0<\ep<1$, let $f_1(x)=|x|^{\ep-n}\chi^{}_{V_1}(x)$, $f_2(x)=|x|^{\frac{\ep-n}{p_2}}\chi^{}_{V}(x)$,
$w_1(x)=|x|^{(\ep-n)p_1/p}$ and $w_2(x)=1$. Then $v_{\vec{w}}(x)=|x|^{\ep-n}$, $v_{\vec{w}}^{1-p'}=|x|^{(n-\ep)(p'-1)}$ and $w_1^{1-p_1'}=|x|^{(n-\ep)p_1'/p}$. Similar arguments as the above show that
\begin{eqnarray*}
\|(R_1)^{1,*}\|_{L^{p'}(v_{\vec w}^{1-p'})\times L^{p_2}(w_2)\rightarrow L^{p_1'}(w_1^{1-p_1'})}
\gtrsim \frac{1}{\ep}=[\vec{w}^1]_{A_{\vec{P}^1}}^{p/{p_1'}}=[\vec{w}]_{A_{\vec{P}}}.
\end{eqnarray*}
Therefore,
\[
  \| R_1 \|_{L^{p_1}(w_1)\times L^{p_2}(w_2)\rightarrow L^p(v_{\vec{w}})}=\|(R_1)^{1,*}\|_{L^{p'}(v_{\vec w}^{1-p'})\times L^{p_2}(w_2)\rightarrow L^{p_1'}(w_1^{1-p_1'})}\gtrsim [\vec{w}]_{A_{\vec{P}}}.
\]
This shows the sharpness of the exponent when $p>\max_i \{p_i'\}$, which completes the proof.

\textbf{Acknowledgements}.\,\, The authors thank Carlos P\'erez for helping improve the quality of this article.

\end{document}